\newtheorem{theorem}{Theorem}
\newtheorem{lemma}[theorem]{Lemma}
\newtheorem{question}[theorem]{Question}
\newtheorem{proposition}[theorem]{Proposition}
\title{Growth of homology torsion of metabelian groups}
\author{Nikolay Nikolov}
\begin{document}
\begin{abstract} We study the growth of torsion of the abelianization  of finite index subgroups in finitely generated metabelian groups. This complements the results in \cite{KKN} which covered the finitely presented amenable groups.
\end{abstract}

\maketitle
\section{Introduction}

For subgroups $A, B$ of a group $G$ we denote by $[A,B]$ the subgroup of $G$ generated by all commutators $[a,b]$ with $a \in A, b \in B$. Set $G'=[G,G]$ and $G^{ab}=G/G' \simeq H_1(G,\mathbb Z)$. 
Let $d(G)$ denote the minimal size of a generating set of a group $G$. Let $t(G)$ be the maximal size of a finite subgroup of $G$ (where we set $t(G)= \infty$ if there is no such maximum).
Note that when $G$ is a finitely generated abelian group then $t(G)$ is the size of the torsion subgroup of $G$ and is always finite. \medskip

Given a finitely generated group $G$ and a subgroup $H$ of finite index in $G$ we are interested in the growth of $t(H^{ab})$ in terms of $[G:H]$. This topic is at the crossroads of group theory, operator algebras, geometry and number theory and has intriguing open questions, see \cite{AGN}, \cite{berg}, \cite{WL}. It is easy to see (cf. \cite{AGN}, Lemma 27, restated as Lemma \ref{fp} below) that if $G$ is a finitely presented group then $t(H^{ab})$ is bounded above by an exponential function in $[G:H]$.
In case $G$ is a finitely presented amenable group and $(H_i)$ is a Farber chain in $G$ (for example if $(H_i)$ are normal subgroups of $G$ with trivial intersection) it is proved in  \cite{KKN} that $t(H_i^{ab})$ grows subexponentially in $[G:H_i]$. By way of contrast \cite{AGN} also proves that when $G$ is allowed to vary over all finitely generated solvable groups of derived length 3, then there is no single function $f$ in terms of $[G:H_i]$ which bounds $t(H_i^{ab})$ as $[G:H_i] \rightarrow \infty$. One is thus led to the question what happens for finitely generated metabelian groups which are not finitely presented. It is easy to see that some function bounding $t(H^{ab})$ exists in this class: There are only countably many finitely generated metabelian groups and a diagonal argument produces a function $f : \mathbb N \rightarrow \mathbb N$ with the following property: Given a finitely generated metabelian group $G$ there is $a=a(G) \in \mathbb N$ such that if $H \leq G$ with $a<[G:H] < \infty$, then $t(H^{ab}) < f([G:H])$. It is natural to expect that this non-constructive bound could be improved when we require that the coset space $G/H$ approximates $G$ sufficiently well, for example if $H$ is member of a chain of normal subgroups $(H_i)$ with trivial intersection. Part 1 of Theorem \ref{meta} shows  that the function $f$ above can be taken to be any superexponential functon, e.g. $f(n)=n^n$. 

\begin{theorem} \label{meta} Let $G$ be a finitely generated metabelian group and let $A=G'$.

1. There is a constant $D$ depending on $G$ such that $t(H^{ab}) < D^{[G:H]}$ for every subgroup $H$ of finite index in $G$. 

2.  Let $(H_i)$ be a sequence of finite index subgroups in $G$ such that $[A: (A \cap H_i)] \rightarrow \infty$. Then
\[ \lim_{i \rightarrow \infty} \frac{\log t(H_i^{ab})}{[G:H_i]}=0.\]
\end{theorem}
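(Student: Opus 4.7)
\emph{Proof plan.} Write $Q=G/A$; then $Q$ is finitely generated abelian and, by P.~Hall's theorem, $A$ is a finitely generated $\mathbb{Z}[Q]$-module under the conjugation action. Since $\mathbb{Z}[Q]$ is Noetherian, $A$ is in fact finitely \emph{presented} as a $\mathbb{Z}[Q]$-module; fix once and for all a presentation $\mathbb{Z}[Q]^{\ell}\xrightarrow{M}\mathbb{Z}[Q]^{k}\to A\to 0$. For a finite index subgroup $H\leq G$ set $B=A\cap H$, $\bar H=HA/A\leq Q$, $n_1=[Q:\bar H]$, $n_2=[A:B]$, so that $[G:H]=n_1n_2$ with $n_1,n_2\leq [G:H]$.

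The plan is to bound $t(H^{ab})$ via two appeals to the Lyndon--Hochschild--Serre spectral sequence. First, the five-term sequence for $1\to B\to H\to \bar H\to 1$ gives
\[ H_2(\bar H,\mathbb{Z})\to B/I_{\bar H}B\to H^{ab}\to \bar H\to 0, \]
where $I_{\bar H}\subset \mathbb{Z}[\bar H]$ is the augmentation ideal, so $t(H^{ab})\leq t(Q)\cdot t(B/I_{\bar H}B)$. Second, applying $\,\cdot\otimes_{\mathbb{Z}[\bar H]}\mathbb{Z}$ to $0\to B\to A\to A/B\to 0$ yields
\[ H_1(\bar H,A/B)\to B/I_{\bar H}B\to A/I_{\bar H}A\to (A/B)_{\bar H}\to 0. \]
Since $|A/B|=n_2$ and $\bar H$ is generated by at most $d(Q)+\log_2 t(Q)$ elements, a Koszul-type estimate in group homology gives $|H_1(\bar H,A/B)|\leq n_2^{c_0}$ for some constant $c_0=c_0(G)$, and hence $t(B/I_{\bar H}B)\leq n_2^{c_0}\cdot t(A/I_{\bar H}A)$.

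The decisive step, and the main obstacle, is the bound $t(A/I_{\bar H}A)\leq D^{n_1}$ for some $D=D(G)$. Tensoring the chosen presentation of $A$ with $\mathbb{Z}[Q/\bar H]$ and identifying $\mathbb{Z}[Q/\bar H]$ with $\mathbb{Z}^{n_1}$ as an abelian group presents $A/I_{\bar H}A$ as the cokernel of an integer matrix $\bar M$ of size $n_1k\times n_1\ell$. Over $\mathbb{C}[Q/\bar H]\cong \mathbb{C}^{n_1}$, Pontryagin duality on the finite abelian group $Q/\bar H$ diagonalises $\bar M\otimes \mathbb{C}$ into $n_1$ blocks $M_\chi$ of size $k\times \ell$, one per character $\chi$ of $Q/\bar H$, whose entries are $\chi$ evaluated on the (fixed) entries of $M$, and so bounded in absolute value by a constant $C_0=C_0(G)$. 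Hadamard's inequality then bounds every $k\times k$ minor of $M_\chi$ by $(C_0\sqrt k)^k$; combining these character-by-character estimates with the finite-index embedding $\mathbb{Z}[Q/\bar H]\hookrightarrow \prod_\chi \mathcal{O}_\chi$ into the product of the relevant cyclotomic rings of integers, and translating into a statement about Fitting ideals, yields $t(A/I_{\bar H}A)\leq \prod_\chi (C_0\sqrt k)^k=D^{n_1}$ with $D=(C_0\sqrt k)^k$.

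Combining all bounds gives $t(H^{ab})\leq t(Q)\cdot n_2^{c_0}\cdot D^{n_1}$. For Part~1, since $n_1,n_2\leq [G:H]$, taking logarithms yields $\log t(H^{ab})\leq C\cdot [G:H]$ for a suitable $C=C(G)$. For Part~2, dividing by $[G:H]=n_1n_2$ gives
\[ \frac{\log t(H_i^{ab})}{[G:H_i]}\leq \frac{\log t(Q)+c_0\log n_2}{n_1n_2}+\frac{\log D}{n_2}, \]
and each summand tends to zero as $n_2\to \infty$, since $n_1n_2\geq n_2\to \infty$. The main technical difficulty is the character-theoretic step, especially when $A/I_{\bar H}A$ has positive $\mathbb{Z}$-rank so $\bar M$ is not of full rank; the cleanest way round this is to phrase the entire argument in terms of the appropriate Fitting ideals of $\bar M$ and track how the embedding into $\prod_\chi \mathcal{O}_\chi$ distorts the integral structure by a factor uniformly bounded in $\bar H$.
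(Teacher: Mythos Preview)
Your high-level plan matches the paper's: split $[G:H]=n_1n_2$ with $n_1=[Q:\bar H]$, $n_2=[A:B]$, obtain an exponential bound in $n_1$ from a fixed finite $\mathbb{Z}[Q]$-presentation of $A$, and a polynomial-in-$n_2$ correction. But your first reduction has a genuine gap. The five-term sequence yields $H_2(\bar H)\to B_{\bar H}\to H^{ab}\to\bar H\to 0$, so the kernel of $H^{ab}\to\bar H$ is $B/H'=B_{\bar H}/J$ with $J$ the image of $H_2(\bar H)\cong\Lambda^2\bar H$. Your inequality $t(H^{ab})\le t(Q)\cdot t(B_{\bar H})$ would require $t(B_{\bar H}/J)\le t(B_{\bar H})$; but torsion is not monotone under quotients, and when $B_{\bar H}$ has positive free rank, dividing out even a single element of $J$ can create arbitrarily large torsion. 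Thus bounding $t(B_{\bar H})$ and $t(A_{\bar H})$ does not by itself bound $t(H^{ab})$. The paper handles exactly this point: working first in the case $A\le H$, it takes the explicit lifts $g_i^{\,n}$ of generators of $\bar H$, expands $[g_i^{\,n},g_j^{\,n}]$ via the commutator identities into a product of $n^2$ conjugates of $[g_i,g_j]$, and so exhibits generators of $J$ with $\ell^1$-norm at most $n^2$ in the natural $\mathbb{Z}[Q/\bar H]$-coordinates. These extra relations, together with the bounded-norm relations of $A$, are then fed into the elementary minor bound of Lemma~\ref{torsion}. The passage from $A\le H$ to general $H$ is a short commutator-index estimate (Proposition~\ref{MA}) rather than a second spectral-sequence step. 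Your argument needs some substitute for this control of $J$.

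On your ``decisive step'' $t(A_{\bar H})\le D^{n_1}$: the conclusion is correct, but the route through $\prod_\chi\mathcal O_\chi$ and Fitting ideals is unnecessary and delicate, since the index $[\prod_\chi\mathcal O_\chi:\mathbb{Z}[Q/\bar H]]$ is \emph{not} bounded independently of $\bar H$. A clean way to finish your idea: the Fourier transform on $Q/\bar H$ is unitary, so the nonzero singular values of the integer matrix $\bar M$ are exactly those of $\bigoplus_\chi M_\chi$, each at most $C_0\sqrt{k\ell}$; and $t(\operatorname{coker}\bar M)$ equals the gcd of the maximal nonvanishing minors, hence is bounded by the product of the nonzero singular values, giving $t(A_{\bar H})\le(C_0\sqrt{k\ell})^{kn_1}$ with no cyclotomic detour. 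The paper's argument is simpler still: the projection $\mathbb{Z}[Q]\to\mathbb{Z}[Q/\bar H]$ does not increase $\ell^1$-norms, so every column of the integer matrix $\bar M$ has $\ell^1$-norm bounded by a constant depending only on $M$, and Lemma~\ref{torsion} gives the bound with no Fourier analysis at all.
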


It is easy to see that the exponential bound in part 1 of Theorem \ref{meta} is sharp.
Let us take $G= C_2 \wr \mathbb Z$ and for $n \in \mathbb N$ let $H_n=\pi^{-1}(n \mathbb Z)$ where $\pi : G \rightarrow \mathbb Z$ is the homomorphim of $G$ onto the top group $\mathbb Z$. Then $t(H_n^{ab}) =2^n$.

As a by-product of our method we can give a short proof of a special case of a theorem of Luck from \cite{WL}.

\begin{theorem}\label{luck} Let $G$ be a finitely presented group with an infinite abelian normal subgroup $A$. Let $(H_i)$ be a sequence of finite index subgroups of $G$ with $[A:(A \cap H_i)] \rightarrow \infty$. Then 
\[ \lim_{i \rightarrow \infty} \frac{\log t(H_i^{ab})}{[G:H_i]}=0.\]
\end{theorem}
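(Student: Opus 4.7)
My plan is to apply Lemma \ref{fp} not to $H_i$ itself (which would give only the useless exponential bound $D^{[G:H_i]}$), but to the larger overgroup $K_i:=AH_i$, and then to bridge between $t(H_i^{ab})$ and $t(K_i^{ab})$ using the Lyndon--Hochschild--Serre five-term sequence, exploiting that $A$ is abelian.

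Set $B_i:=A\cap H_i$ and $Q_i:=H_i/B_i\cong K_i/A$. Since $A$ is normal in $G$, $K_i$ is a subgroup of $G$, and the multiplicativity of indices gives
\[
[G:K_i]\,[A:B_i]=[G:H_i].
\]
Combined with $[A:B_i]\to\infty$ this forces $[G:K_i]/[G:H_i]\to 0$. Applying Lemma \ref{fp} to the finitely presented group $G$ therefore produces
\[
\log t(K_i^{ab})\le [G:K_i]\log D = o([G:H_i]).
\]

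The extension $1\to B_i\to H_i\to Q_i\to 1$ yields the five-term exact sequence
\[
H_2(Q_i,\mathbb Z)\longrightarrow B_i/[H_i,B_i]\longrightarrow H_i^{ab}\longrightarrow Q_i^{ab}\longrightarrow 0.
\]
Since $Q_i^{ab}=(K_i/A)^{ab}$ is a quotient of $K_i^{ab}$, one has $|t(Q_i^{ab})|\le |t(K_i^{ab})|$. Denote by $\overline B_i:=B_i/(B_i\cap[H_i,H_i])$ the image of $B_i$ in $H_i^{ab}$. The short exact sequence $0\to\overline B_i\to H_i^{ab}\to Q_i^{ab}\to 0$ gives
\[
|t(H_i^{ab})|\le |t(\overline B_i)|\cdot |t(Q_i^{ab})|,
\]
so the theorem reduces to proving $\log|t(\overline B_i)|=o([G:H_i])$.

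The main obstacle lies in this last estimate. The inclusions $B_i\hookrightarrow A$ and $H_i\hookrightarrow K_i$ induce a homomorphism $\overline B_i\to\overline A$, where $\overline A$ denotes the image of $A$ in $K_i^{ab}$; since $\overline A$ is a subgroup of $K_i^{ab}$, its torsion satisfies $|t(\overline A)|\le |t(K_i^{ab})|$, already subexponential in $[G:H_i]$ by the first step. The cokernel of this map is a quotient of the finite group $A/B_i$ of order $[A:B_i]\le [G:H_i]$, contributing only a factor polynomial in $[G:H_i]$, negligible after taking logs. The delicate point is to bound the kernel $(B_i\cap[K_i,K_i])/(B_i\cap[H_i,H_i])$; I would control it by invoking that $A$ is finitely generated as a $\mathbb Z[G/A]$-module (a consequence of the finite presentation of $G$, by an argument of Bieri) together with Schreier's formula, which bounds $d(Q_i)$ linearly in $[G:K_i]$, to estimate the relevant first homology group. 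Assembling these bounds gives the desired subexponential estimate on $|t(H_i^{ab})|$.
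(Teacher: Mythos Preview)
Your overall strategy --- pass to $K_i=AH_i$, bound $t(K_i^{ab})$ via Lemma~\ref{fp}, then bridge down to $H_i$ --- is exactly the paper's. The gap is in the bridge. The step ``since $Q_i^{ab}$ is a quotient of $K_i^{ab}$, one has $t(Q_i^{ab})\le t(K_i^{ab})$'' is simply false: torsion can grow under quotients (think $\mathbb Z\twoheadrightarrow\mathbb Z/n$). Here $Q_i^{ab}=(K_i/A)^{ab}$ is the abelianization of an index-$m_i$ subgroup of $G/A$, but $G/A$ need not be finitely presented, so no exponential bound on $t(Q_i^{ab})$ in terms of $m_i$ is available. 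Your decomposition $0\to\overline{B_i}\to H_i^{ab}\to Q_i^{ab}\to 0$ therefore does not reduce the problem. The second issue is that the ``delicate point'' --- bounding the kernel $(B_i\cap K_i')/(B_i\cap H_i')$ --- is only gestured at; the appeal to finite generation of $A$ as a $\mathbb Z[G/A]$-module is neither clearly true for arbitrary finitely presented $G$ nor is it explained how it would yield the bound.

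The paper's Proposition~\ref{MA} fixes both issues by decomposing $H_i^{ab}$ along $(K_i'\cap H_i)/H_i'$ instead of along $\overline{B_i}$. Since $A$ is abelian, $K_i'=H_i'[A,K_i]$, and the elementary Lemma~\ref{fin} (for a submodule $L\le M$ of finite index, $[M(G{-}1):L(G{-}1)]\le[M:L]^{d(G)}$), applied to $B_i\le A$ as $\mathbb Z[K_i/A]$-modules, gives $[(K_i'\cap H_i):H_i']\le a_i^{d(K_i/A)}$. Thus this piece is \emph{finite}, while the complementary quotient $H_i/(K_i'\cap H_i)\cong H_iK_i'/K_i'$ is a \emph{subgroup} of $K_i^{ab}$, so its torsion is honestly bounded by $t(K_i^{ab})$. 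One obtains $t(H_i^{ab})\le a_i^{d(K_i/A)}t(K_i^{ab})$, and with $d(K_i/A)\le d(K_i)\le d\,m_i$ from Schreier the computation~(\ref{x}) finishes. No module-finiteness of $A$ is needed.
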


Luck's result proves subexponential growth of the torson of integral homology in all degrees (provided $G$ has type $F$) in the more general situation when $G$ has an infinite normal elementary amenable subgroup but under the stronger assumption that $(H_i)$ is a normal chain in $G$ with trivial intersection. 

\subsection*{A question} Let $G$ be an amenable group of type $F_{n+1}$. Corollary 2 of \cite{KKN} proved subexponential growth of $t(H_n(M_i, \mathbb Z))$ for any Farber chain of finite index subgroups $(M_i)$ in $G$. In view of Theorem \ref{meta} we can ask whether in case $G$ is a metabelian group the conclusion holds under the weaker assumption that $G$ is of type $F_n$ or even $FP_{n}$. 
\begin{question} Let $G$ be a metabelian group of type $F_n$ and let $(M_i)$ be a chain of normal subgroups with trivial intersection in $G$. Is it true that
\[ \lim_{i \rightarrow \infty} \frac{t(H_n(M_i, \mathbb Z))}{[G:M_i]}=0?\]
\end{question}
Note that we need that $G$ is at least of type $FP_n$ in order to guarantee that $H_n(G,\mathbb Z)$ is finitely generated.
\section{Proofs}

We begin with some elementary results.

\begin{proposition} \label{t} Let $N$ be a normal subgroup of a group $G$. Then \[ t(N) \leq t(G) \leq t(N) t(G/N).\] If $G/N$ is torsion-free then $t(N)=t(G)$. If $N$ is finite then $t(G)=|N| t(G/N)$.
\end{proposition}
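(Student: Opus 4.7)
The plan is to establish the four statements by elementary counting of orders of finite subgroups. The inclusion $t(N) \leq t(G)$ is immediate, since any finite subgroup of $N$ is also a finite subgroup of $G$.

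For the upper bound $t(G) \leq t(N) t(G/N)$, I would take an arbitrary finite subgroup $F \leq G$ and apply the second isomorphism theorem to the pair $F,N$: the intersection $F \cap N$ is a finite subgroup of $N$, so $|F \cap N| \leq t(N)$, and the quotient $F/(F \cap N) \cong FN/N$ is a finite subgroup of $G/N$, so $[F : F \cap N] \leq t(G/N)$. Multiplying gives $|F| \leq t(N) t(G/N)$, and taking the supremum over $F$ yields the inequality.

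For the torsion-free quotient statement, note that every finite subgroup $F$ of $G$ projects to a finite subgroup of $G/N$, which must be trivial when $G/N$ is torsion-free; hence $F \leq N$, so $t(G) \leq t(N)$, and combined with the first inequality we get $t(G)=t(N)$.

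When $N$ is finite, $t(N) = |N|$ and the general upper bound specializes to $t(G) \leq |N|\,t(G/N)$. For the reverse inequality, the key observation is that since $N$ is finite, the preimage in $G$ of any finite subgroup of $G/N$ is itself finite. Thus, taking a finite subgroup of $G/N$ of maximal order (or of arbitrarily large order if $t(G/N)=\infty$) and lifting it to $G$ produces a finite subgroup of $G$ of order $|N|\,t(G/N)$, which gives the equality. No substantial obstacle arises; the proof rests on the second isomorphism theorem and the fact that finite-by-finite groups are finite.
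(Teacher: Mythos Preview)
Your argument is correct and complete. The paper itself omits the proof entirely (writing only ``This is clear''), so you have simply spelled out the standard argument via the second isomorphism theorem and elementary lifting that the author regarded as obvious; there is no alternative route to compare against.
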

\begin{proof} This is clear. \end{proof}
\begin{lemma} \label{fin}  Let $M$ be a right $\mathbb Z [G]$-module and let $L \leq M$ be a submodule of finite index. Then $[M(G-1): L(G-1)] \leq [M:L]^d$ where $d=d(G)$
\end{lemma}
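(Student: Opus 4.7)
The plan is to exploit the fact that $G$ is generated by $d$ elements $g_1,\dots,g_d$ to turn the problem about the module $M(G-1)$ into one about the direct sum $M^d$.

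First I would verify the following identity for the submodule $M(G-1)$: it coincides with $\sum_{i=1}^d M(g_i-1)$. The containment $\supseteq$ is trivial, and the reverse containment reduces to showing that $m(g-1)$ lies in $\sum_i M(g_i-1)$ for every $m \in M$ and every $g \in G$. Writing $g$ as a word in $g_i^{\pm 1}$, this is a straightforward induction on word length using the two identities $m(gh-1) = (mg)(h-1) + m(g-1)$ and $m(g^{-1}-1) = -(mg^{-1})(g-1)$. The same identity applied to $L$ gives $L(G-1) = \sum_{i=1}^d L(g_i-1)$.

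With this in hand I would consider the $\mathbb Z$-linear map
\[ \phi : M^d \longrightarrow M(G-1), \qquad (m_1,\dots,m_d) \mapsto \sum_{i=1}^d m_i(g_i-1). \]
By the previous paragraph $\phi$ is surjective, and its restriction to $L^d$ has image exactly $L(G-1)$. Hence $\phi$ descends to a surjection $M^d/L^d \twoheadrightarrow M(G-1)/L(G-1)$, which yields
\[ [M(G-1) : L(G-1)] \leq [M^d : L^d] = [M:L]^d, \]
as required.

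I do not expect any real obstacle; the only slightly delicate point is the identity $M(G-1) = \sum_i M(g_i-1)$, and in particular making sure that inverses of generators do not enlarge the spanning set, which is handled by the second commutator identity above.
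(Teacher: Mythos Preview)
Your proof is correct and follows essentially the same route as the paper: both define the surjection $M^d \to M(G-1)$, $(m_1,\dots,m_d)\mapsto \sum_i m_i(g_i-1)$, observe that it carries $L^d$ onto $L(G-1)$, and pass to the induced surjection on quotients. You simply supply more detail than the paper does for the identity $M(G-1)=\sum_i M(g_i-1)$ (a minor quibble: calling the relation $m(g^{-1}-1)=-(mg^{-1})(g-1)$ a ``commutator identity'' is a misnomer, though the identity itself is fine).
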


\begin{proof} Let $g_1, \ldots, g_d$ be a generating set for $G$ of minimal size. Note that
$M(G-1)= \sum_{i=1}^d M(g_i-1)$. Therefore the map $f: M^d \rightarrow M(G-1)$ defined by $f(m_1, \ldots,m_d)= \sum_{i=1}^d m_i (g_i-1)$ ($m_i \in M$) is surjective. Sumilarly $f(L^d)= L(G-1)$ and so $f$ induces an additive group homomorphism \[ \bar f : \left(\frac{M}{L} \right)^d \longrightarrow \frac{M(G-1)}{L(G-1)} \] which is surjective. The claim of the lemma follows.
\end{proof}

The following Lemma is well known (compare with Lemma  6 of \cite{KKN}). For a vector $v =(x_1, \ldots x_n) \in \mathbb Z^n$ we denote by $|v|$ the $l_1$-norm of $v$, i.e. $|v|= \sum_{i=1}^n |x_i|$.
\begin{lemma} \label{torsion} Let $n \in \mathbb N$ and let $v_1, \ldots, v_k \in \mathbb Z^n$.
Let $G= \mathbb Z^n /( \sum_{j=1}^k \mathbb Z v_j)$. Then $t(A) \leq \prod_{i=1}^n c_i$ where $c_1, \ldots c_n$ are the $n$ largest values from the list $|v_1|, \ldots, |v_k|$.
\end{lemma}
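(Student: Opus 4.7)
The plan is to read $t(G)$ off the Smith normal form of the matrix $M$ whose rows are $v_1,\ldots,v_k$, and then bound the resulting quantity using Hadamard's inequality. Identify $G$ with $\mathbb Z^n/\mathbb Z^k M$, the cokernel of right-multiplication by $M$. If $r$ is the rank of $M$ and $d_1\mid d_2\mid\cdots\mid d_r$ are its invariant factors, then $G\cong \mathbb Z^{n-r}\oplus\bigoplus_{i=1}^r\mathbb Z/d_i\mathbb Z$, so $t(G)=d_1 d_2\cdots d_r$. Classically this product equals the gcd of all $r\times r$ minors of $M$, so in particular $t(G)$ divides any nonzero $r\times r$ minor of $M$.

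Next, since $M$ has rank $r$, I would pick $r$ rows $v_{i_1},\ldots,v_{i_r}$ and $r$ columns producing a square submatrix with nonzero determinant $m$. Hadamard applied to this submatrix gives
\[ |m|\leq \prod_{l=1}^r \|w_l\|_2 \leq \prod_{l=1}^r \|v_{i_l}\|_2 \leq \prod_{l=1}^r |v_{i_l}|, \]
where $w_l$ denotes the restriction of $v_{i_l}$ to the chosen $r$ columns, and we use the fact that the $\ell_2$-norm is bounded by the $\ell_1$-norm. Combined with the divisibility above, this yields $t(G)\leq \prod_{l=1}^r |v_{i_l}|$.

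To conclude, zero vectors $v_j$ contribute nothing to $\sum_j \mathbb Z v_j$ and may be discarded, so we may assume $|v_j|\geq 1$ for every remaining $j$. Since $r\leq n$, the product of any $r$ of the $|v_j|$ is at most the product of the $n$ largest, namely $\prod_{i=1}^n c_i$. The only step requiring a moment's care is the degenerate case $r<n$: one might worry that our bound $\prod_{l=1}^r |v_{i_l}|$ has too few factors, but the monotonicity $c_i\geq 1$ absorbs this cleanly (if there are fewer than $n$ nonzero $v_j$'s the right-hand side is interpreted as $+\infty$). The real content of the argument is Hadamard's inequality; the rest is bookkeeping.
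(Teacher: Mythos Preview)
Your argument is correct and follows essentially the same route as the paper's proof: both identify $t(G)$ with the gcd of the nonzero minors of maximal rank in the matrix with rows $v_1,\ldots,v_k$, and then bound a single nonzero $r\times r$ minor by the product of the $\ell_1$-norms of the $r$ rows involved. The only cosmetic difference is that you pass through Hadamard's inequality and the comparison $\|\cdot\|_2\leq\|\cdot\|_1$, whereas the paper's one-line version implicitly uses the direct multilinearity bound $|\det B|\leq\prod_i\|w_i\|_1$; the resulting estimate is the same.
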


\begin{proof}Let $X$ be the $n \times k$ matrix with rows $v_1, \ldots, v_k$. Then $t(A)$ is the g.c.d of the non-zero minors of maximal rank in $X$. Any such minor has rank at most $n$ and so is at most $c_1 \cdots c_n$. \end{proof}

\begin{lemma}[Lemma 27 of \cite{AGN}] \label{fp} Let $G$ be a finitely presented group. There is a constant $C$ depending on $G$ such that $t(H^{ab}) \leq C^{[G:H]}$ for any subgroup $H$ of finite index in $G$.
\end{lemma}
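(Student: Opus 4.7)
The plan is to produce an explicit presentation of $H$ via Reidemeister--Schreier rewriting and then invoke Lemma \ref{torsion}. Fix once and for all a finite presentation of $G$ with $d=d(G)$ generators and relators $r_1,\ldots,r_k$ of word length at most $L$; the numbers $d,k,L$ depend only on $G$. Write $G=F/R$ with $F$ free of rank $d$.

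Given $H\leq G$ of index $n$, let $\tilde H$ be the preimage of $H$ in $F$, so $[F:\tilde H]=n$. By Schreier's formula $\tilde H$ is free of rank $(d-1)n+1$. Choosing a Schreier transversal $T$ for $\tilde H$ in $F$ yields a Schreier free generating set for $\tilde H$ together with a standard rewriting process $\tau$. A presentation of $H$ is then given by these $(d-1)n+1$ generators subject to the $kn$ relators $\tau(t r_j t^{-1})$ with $t\in T$ and $1\leq j\leq k$. The crucial observation is that each letter of $r_j$ contributes at most one letter to the rewrite, so that in the Schreier generators $|\tau(t r_j t^{-1})|\leq |r_j|\leq L$.

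Abelianizing this presentation, $H^{ab}$ is the cokernel of an integer matrix of size $kn\times((d-1)n+1)$ whose rows all have $l_1$-norm at most $L$. Lemma \ref{torsion} then immediately gives
\[ t(H^{ab})\leq L^{(d-1)n+1}\leq L^{dn}=C^{[G:H]}, \]
where $C=L^d$ depends only on $G$. (If $G$ is free then $L=0$ but $H^{ab}$ is free abelian, so the bound is trivial.)

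The argument is essentially formal once one has the two inputs. The only substantive point is the length-preserving property of the Reidemeister rewriting $\tau$, which is exactly what converts a linear-in-$n$ number of generators and relators of bounded word length into a matrix to which Lemma \ref{torsion} applies. This is the ``main obstacle,'' though it is a well-known feature of the standard construction.
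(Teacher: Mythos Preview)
Your argument is correct. Note, however, that the paper does not supply its own proof of this lemma: it is simply quoted as Lemma~27 of \cite{AGN}, so there is no in-paper argument to compare against. Your approach---Reidemeister--Schreier rewriting followed by Lemma~\ref{torsion}---is the natural one, and the paper's proof of Proposition~\ref{exp} proceeds in exactly the same spirit (build an explicit relation module with controlled $l_1$-norms, then invoke Lemma~\ref{torsion}), so your method is entirely consonant with the paper.

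Two minor remarks. First, your key claim that $|\tau(t r_j t^{-1})|\leq |r_j|$ is indeed a standard feature of the rewriting process: with a Schreier transversal the initial segment $t$ and the final segment $t^{-1}$ contribute only trivial Schreier generators, so only the letters of $r_j$ survive. Second, Lemma~\ref{torsion} as stated presumes at least as many vectors as the ambient rank; when $kn<(d-1)n+1$ you can either pad with zero relators or observe directly that the maximal nonzero minor then has size at most $kn$, giving $t(H^{ab})\leq L^{kn}$. Either way one obtains $t(H^{ab})\leq C^{[G:H]}$ with $C=L^{\max(d,k)}$.
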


\begin{proposition} \label{MA} Let $G$ be group with a normal abelian subgroup $A$. Let $H$ be a subgroup of finite index in $G$ such that $HA=G$. Then \[t(H^{ab}) \leq [G:H]^{d(G/A)} t(G^{ab}).\]
\end{proposition}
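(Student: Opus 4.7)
The plan is to compare $H^{ab}$ with $G^{ab}$ via the homomorphism $\psi : H^{ab} \to G^{ab}$ induced by the inclusion $H \hookrightarrow G$. The image of $\psi$ is a subgroup of $G^{ab}$, so its torsion is at most $t(G^{ab})$ by Proposition~\ref{t}; applying Proposition~\ref{t} again to $H^{ab}$ with normal subgroup $\ker\psi$ gives $t(H^{ab}) \leq |\ker\psi|\cdot t(G^{ab})$. The task therefore reduces to showing $|\ker\psi| = |(H\cap G')/H'| \leq [G:H]^{d(G/A)}$.

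The key identity I will establish is $G' = H'\cdot[G,A]$. Since $G = HA$, any commutator in $G$ has the form $[h_1 a_1, h_2 a_2]$ with $h_i\in H$ and $a_i\in A$. Expanding with the identities $[xy,z]=[x,z]^y[y,z]$ and $[x,yz]=[x,z][x,y]^z$, every term containing a commutator with an entry in $A$ lies in $[G,A]$, and conjugation by $A$ acts trivially modulo $[G,A]$. A short calculation then gives $[h_1 a_1, h_2 a_2] \equiv [h_1, h_2] \pmod{[G,A]}$, proving $G' \subseteq H'\cdot[G,A]$; the reverse containment is clear since $H', [G,A] \subseteq G'$.

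Now set $B = H\cap A$, noting $[A:B] = [G:H] =: n$. Given $x\in H\cap G'$, the identity above lets me write $x = h'\cdot i$ with $h'\in H'$ and $i\in[G,A]$; then $i = (h')^{-1}x\in H$, and since $i\in[G,A]\subseteq A$, in fact $i\in H\cap A = B$. Thus $H\cap G' = H'\cdot(B\cap[G,A])$, and the second isomorphism theorem yields
\[
(H\cap G')/H' \;\cong\; (B\cap[G,A])\big/\bigl(B\cap[G,A]\cap H'\bigr).
\]
Since $[H,B]\subseteq H'\cap[G,A]\cap B$, this group is a quotient of $(B\cap[G,A])/[H,B]$, which in turn embeds in $[G,A]/[H,B]$.

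To conclude, I view $A$ as a right $\mathbb Z[Q]$-module with submodule $B$ of index $n$, where $Q = G/A$. Because $[G,A] = A(Q-1)$ and $[H,B] = B(Q-1)$, Lemma~\ref{fin} applied to $Q$, $A$, $B$ yields $|[G,A]/[H,B]| \leq n^{d(Q)} = [G:H]^{d(G/A)}$, which bounds $|\ker\psi|$ and completes the proof. I expect the identity $G' = H'\cdot[G,A]$ to be the main conceptual step; once in hand, the rest is bookkeeping via the second isomorphism theorem and Lemma~\ref{fin}.
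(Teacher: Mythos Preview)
Your proof is correct and follows essentially the same approach as the paper: both hinge on the identity $G'=H'[G,A]$, then bound $|(H\cap G')/H'|$ by $\bigl|[G,A]/[B,G]\bigr|$ (using $[H,B]=[B,G]$) and invoke Lemma~\ref{fin}, finally applying Proposition~\ref{t} to pass from $t(H^{ab})$ to $t(G^{ab})$. The only cosmetic difference is that the paper bounds $[G':H']$ directly via $[G':H']=[[A,G]:[A,G]\cap H']\le [[A,G]:[B,G]]$, whereas you route through $H\cap G'=H'\cdot(B\cap[G,A])$ and the second isomorphism theorem; the substance is identical.
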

\begin{proof} Let $B= A \cap H$, this is a normal subgroup of $G$ with $[A:B]=[G:H]=n$ say.
Considering $A$ and $B$ as $\mathbb Z [G/A] $-modules (with the action of $G/A$ on $A$ by conjugation) Lemma \ref{fin} gives $[[A,G]:[B,G]] \leq n^d$ where $d=d(G/A)$.

The group $A/[A,G]$ is central in $G/[A,G]$ and expanding $G'=[G,G]=[HA,HA]$ mod $[A,G]$ we obtain $G'=H'[A,G]$. Therefore $[G':H']= [[A,G]: ([A,G] \cap H')] \leq [[A,G]:[B,G]]$ since $B= H \cap A$ and so $H' \geq [B,H]=[B,G]$. Therefore $[G':H'] \leq n^d$ and in particular $[(G' \cap H):H'] \leq n^d$.

Now we can apply Proposition \ref{t} to $H/H'$ with a finite normal subgroup $(G' \cap H)/H'$ and we obtain \[ t(H/H')= |(G' \cap H)/H'| \ t(H/(G' \cap H)) \leq n^d t(H/(G' \cap H)).\] On the other hand $H / (G' \cap H) \simeq HG'/G' \leq G/G'$. Hence $t(H/(G' \cap H))=  t(HG'/G') \leq t(G/G')$ and the result follows.
\end{proof}

\begin{proof}[Proof of Theorem \ref{luck}]. Let $G_i=AH_i$ and $m_i=[G:G_i], a_i=[G_i:H_i]=[A:(A \cap H_i)]$.
Thus $[G:H_i]=m_ia_i$ with $a_i \rightarrow \infty$. Let $d=d(G)$. The Nielsen-Schreier theorem gives $d(G_i) \leq (d-1)m_i+1 \leq dm_i$.  By Lemma \ref{fp} there is a constant $C$ depending on $G$ such that
$t(G_i^{ab}) \leq C^{m_i}$. Now we apply Proposition \ref{MA} to $G_i$ with normal subgroup $A$ and a finite index subgroup $H_i$ (which satisfies $H_iA=G_i$ by the definition of $G_i$). We obtain

\[ t(H^{ab}_i) \leq a_i^{d(G_i/A)} t(G^{ab}_i) \leq a_i^{dm_i}C^{m_i}.\]

Therefore
\begin{equation} \label{x} \frac{\log t(H_i^{ab})}{[G:H_i]} \leq \frac{m_i(d \log a_i + \log C)}{a_im_i}= \frac{d \log a_i+ \log C}{a_i} \rightarrow 0,\end{equation}
since $a_i^{-1}\log a_i \rightarrow 0$ as $a_i \rightarrow \infty$.
\end{proof}

\begin{proof}[Proof of Theorem \ref{meta}]

We need the following.

\begin{proposition} \label{exp} Let $G$ be a finitely generated metabelian group. There is a constant $C$ depending on $G$ with the following property: Let $H$ be a subgroup of finite index in $G$ containing $G'$. Then $t(H^{ab}) \leq C^{[G:H]}$.
\end{proposition}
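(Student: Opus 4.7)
Set $A=G'$ and $Q=G/A$, so $Q$ is finitely generated abelian and $A$ is a finitely generated $\mathbb Z[Q]$-module (generated by $[g_i,g_j]$ for generators $g_i$ of $G$). Fix once and for all a free presentation $\mathbb Z[Q]^k\xrightarrow{M}\mathbb Z[Q]^n\twoheadrightarrow A$, generators $g_1,\dots,g_r$ of $G$ whose images generate $Q$, and constants $L:=\max_j|M_j|$ and $N:=\max_{u,v}|[g_u,g_v]|_A$, where $|M_j|$ is the $l_1$-norm of the $j$-th row of $M$ in $\mathbb Z[Q]^n$ and $|[g_u,g_v]|_A$ is the $l_1$-norm of $[g_u,g_v]\in A$ in its expansion with respect to the chosen $\mathbb Z[Q]$-generators of $A$. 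Given $H$ with $A\le H\le G$ of index $m$, put $P=H/A\le Q$. Proposition \ref{t} applied to the exact sequence $0\to A/H'\to H^{ab}\to P\to 0$ gives $t(H^{ab})\le t(A/H')\cdot t(Q)$, so it suffices to bound $t(A/H')$ by $C^m$ for some constant $C=C(G)$.

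The strategy is to present $A/H'$ as an abelian group with $nm$ generators and $km+O(1)$ relations of controlled $l_1$-norm, then apply Lemma \ref{torsion}. Since $[A,H]=A(P-1)\le H'$, the group $H/[A,H]$ is a central extension of $P$ by $A_P:=A/[A,H]$ and hence nilpotent of class at most two. Tensoring the presentation of $A$ with $\mathbb Z[Q/P]$ over $\mathbb Z[Q]$ (using that $|Q/P|=m$) yields a $\mathbb Z$-presentation of $A_P$ with $nm$ generators and $km$ relations, each of $l_1$-norm at most $L$ (the projection $\mathbb Z[Q]\twoheadrightarrow\mathbb Z[Q/P]$ does not increase $l_1$-norms). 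The further quotient $A/H'=A_P/(H'/[A,H])$ requires extra relations: in the class-$2$ nilpotent group $H/[A,H]$, the subgroup $H'/[A,H]$ is generated by the values $c(e_l,e_{l'})=[\epsilon_l,\epsilon_{l'}]\bmod[A,H]$ of the commutator pairing on any lifts $\epsilon_l$ of a generating set $e_1,\dots,e_s$ of $P$ with $s\le d(Q)$, contributing only $\binom{s}{2}$ additional relations.

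The main obstacle is to bound $|c(e_l,e_{l'})|_{A_P}$ polynomially in $m$. Via Hermite normal form on $P\le Q$ we may choose the $e_l$ so that their coordinates $(m_{lu})_u$ with respect to $\bar g_1,\dots,\bar g_r$ satisfy $|m_{lu}|=O(m)$ (absorbing the torsion part of $Q$ into the implicit constant); then $\epsilon_l:=\prod_u g_u^{m_{lu}}$ lies in $H$ and lifts $e_l$. Iterating the commutator identities $[xy,z]=[x,z]^y[y,z]$ and $[x,yz]=[x,z][x,y]^z$ yields, in the $\mathbb Z[Q]$-module $A$, the explicit formula
\[ [g_u^a,g_v^b]=[g_u,g_v]\cdot(1+\bar g_v+\cdots+\bar g_v^{b-1})(1+\bar g_u+\cdots+\bar g_u^{a-1}),\]
so $|[g_u^{m_{lu}},g_v^{m_{l'v}}]|_A=O(m^2)$. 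Expanding $[\epsilon_l,\epsilon_{l'}]$ as a sum of $r^2$ conjugates of such terms, and using that conjugation preserves the $l_1$-norm while the projection $A\to A_P$ does not increase it, gives $|c(e_l,e_{l'})|_{A_P}=O(m^2)$. Lemma \ref{torsion} applied to the resulting $\mathbb Z$-presentation of $A/H'$ then yields $t(A/H')\le L^{nm}\cdot O(m^2)^{\binom{s}{2}}$, which is at most $C^m$ for any fixed $C>L^n$ and all sufficiently large $m$; the finitely many small values of $m$ are absorbed into the constant.
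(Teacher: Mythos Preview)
Your proof is correct and follows essentially the same strategy as the paper: present the relevant quotient of $A=G'$ as $\mathbb Z^{nm}$ modulo (i) the $km$ relations obtained from the $\mathbb Z[Q]$-presentation of $A$ by passing to $P$-coinvariants, each of bounded $l_1$-norm, together with (ii) a bounded number of ``commutator'' relations of $l_1$-norm $O(m^2)$, and then apply Lemma~\ref{torsion}.

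The one genuine difference is in how the commutator relations are produced. You work directly with $H'$: you invoke Hermite normal form to find generators $e_l$ of $P=H/A$ whose coordinates in the $\bar g_u$ are $O(m)$, lift them to $\epsilon_l\in H$, and then expand $[\epsilon_l,\epsilon_{l'}]$. The paper instead replaces $H'$ by the smaller subgroup $L'W$, where $L=G^nG'\le H$ and $W=[A,H]$; since $H/W$ is nilpotent of class~$2$ and $L$ has finite index in $H$, one has $t(H/H')\le t(H/L'W)$, and the advantage is that $L$ comes with the explicit generators $g_1^n,\dots,g_d^n$, so the extra relations are simply the $[g_i^n,g_j^n]$, whose $l_1$-norm is visibly at most $n^2$ without any lattice-reduction argument. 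Your route is slightly more direct (you bound $t(A/H')$ itself rather than a coarser quotient), at the cost of the Hermite normal form step; the paper's route trades that step for the short observation $L'W\le H'$. Both yield the same final estimate $t(H^{ab})\le C^{[G:H]}$.
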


Let us postpone the proof of Proposition \ref{exp} for the moment and finish the proof of Theorem \ref{meta}.

Define $G_i=AH_i$ and set $m_i=[G:G_i], a_i=[G_i:H_i]=[A: (A \cap H_i)]$ so that $[G:H_i]=a_im_i$. By Proposition \ref{exp} we have $t(G_i^{ab}) \leq C^{m_i}$ for some constant $C$ depending only on $G$. On the other hand we can apply Proposition \ref{MA} to $G_i$ with a normal abelian subgroup $A$ and a finite index subgroup $H_i$ obtaining $t(H_i) \leq a_i^{d(G_i/A)} \  t(G_i^{ab})$. Since $G/A$ is abelian we deduce that $d(G_i/A) \leq 
d(G/A)=d$ say, and so $t(H_i^{ab}) \leq a_i^d C^{m_i}$. Part 1 follows since $a_i^d C^{m_1} \leq 2^{da_i}C^{m_i} \leq (2^dC)^{[G:H_i]}$ and we can take $D=2^dC$.

Part 2 of Theorem \ref{meta} easily follows from a computation similar to (\ref{x}) using that $a_i \rightarrow \infty$.
\end{proof} 

\medskip

It remains to prove Proposition \ref{exp}.
\begin{proof}[Proof of Proposition \ref{exp}]
Let $g_1, \ldots, g_d$ be a generating set of $G$. Let $n=[G:H]$ and note that $H$ is normal in $G$ since $H \geq G'$. Let $L:= G^nG'$, then $L \leq H$ and $[G:L] \leq n^d$. Let $A:=G'$, then $A$ is a $\mathbb Z [G^{ab}]$ module generated by $\{ [g_i,g_j] \ | \ 1 \leq i <j \leq d\}$.
Let $W:=[A,H] \leq A$. The quotient $H/W$ is a finitely generated nilpotent group of class at most 2 and $LW/W$ is a subgroup of finite index in $H/W$. Therefore $(LW/W)'=L'W/W$ has finite index in $(H/W)'=H'W$, i.e. the index $[H': L'W]$ is finite. By Proposition \ref{t} $t(H/L'W)=t(H/H')|H'/LW|$ and in particular $t(H/H') \leq t(H/L'W)$.  In turn $t(H/L'W) \leq t(H/A) t(A/L'W) \leq t(G^{ab})t(A/L'W)$. We will find a bound for $t(A/L'W)$. Note that $A/W$ is in the centre of $L/W$ and $L= \langle g_1^n, \ldots, g_d^n \rangle A$. Hence $L'W= \langle  [g_i^n,g_j^n] \ | \ 1 \leq i <j \leq d \rangle W$.

Let 
\[ X:= \bigoplus_{ 1 \leq i < j \leq d}e_{i,j} \mathbb Z [G^{ab}]\] be the free $\mathbb Z [G^{ab}]$-module with basis $\{e_{i,j} |  \ 1 \leq i < j \leq d\}$ and let $f: X \rightarrow A$ be the surjective $\mathbb Z [G^{ab}]$-module homomorphism such that $f(e_{i,j})=[g_i,g_j]$. Since $X$ is a Noetherian module $\ker f$ is generated (as a $\mathbb Z [G^{ab}]$-module) by finitely many elements, say $\{r_1, \ldots, r_k\} \subset X$.

Let $Y:=X/X(H-1)= \bigoplus_{ i < j}e_{i,j} \mathbb Z [G/H]$ and denote by $\pi : X \rightarrow Y$ the natural quotient map such that $\pi(e_{i,j} \cdot aG')= e_{i,j}\cdot aH$ for each $e_{i,j}$ and each $a \in G$. Again there is a unique $\mathbb Z [G/H]$-module homomorphism $h: Y \rightarrow A/W=A/[A,H]$ such that $h(e_{i,j})= [g_i,g_j]W$. We have $f=h \circ \pi$.

Using the commutator identities $[ab,c]=[a,c]^b [b,c]$ and $[c,ab]=[c,b][c,a]^b$ we can write $[g^n_i,g_j^n]= \prod_{s=1}^{n^2} [g_{i},g_{j}]^{z_{i,j,s}}$ for some elements $z_{i,j,s} \in G$.
Define $u_{i,j}= e_{i,j} \cdot \sum_{s=1}^{n^2}z_{i,j,s} G'$, we have $f(u_{i,j})=[g_i^n,g_j^n]$.

We have \[ L'W= \langle  [g_i^n,g_j^n] \ | \ 1 \leq  i <j  \leq d \rangle W=f(X(H-1))+ f(\sum_{1 \leq i<j \leq n} \mathbb Z u_{i,j})\] and from $f=h \circ \pi$ it follows that
\[ \frac{A}{L'W} \simeq  \frac{Y} { \pi( \ker f) + \sum_{i<j} \mathbb Z \pi(u_{i,j})}. \]

We consider the $l^1$ norm $|-|_X$ on the free $\mathbb Z$-module $X$ (respectively  the $l^1$ norm $|-|_Y$ on $Y$) as the sum of the absolute values of coordinates computed with respect to the standard $\mathbb Z$-basis $\{e_{i,j} \bar g \  | \ i< j, \bar g \in G^{ab} \}$ of $X$ (respectively the $\mathbb Z$-basis $\{e_{i,j} \tilde g \  | \ i<j, \ \tilde g \in G/H\}$ of $Y$). Note that for any element $v \in X$ we have $|v|_X \geq |\pi(v)|_Y$ and also $|v|_X= |v \cdot \bar g |_X$ for any $\bar g \in G^{ab}$.

Let $c=\max \{ |r_1|_X, \ldots, |r_k|_X\}$ where $r_1, \ldots r_k$ are the $\mathbb Z[G^{ab}]$ module generators of $\ker f$. Then $\pi(\ker f)$ is generated as $\mathbb Z$-module by the set 
$T:=\{ \pi (r_s) \tilde g  \  | \ s=1,\ldots, k, \ \tilde g \in G/H \}$.
Observe that $|\pi(u_{i,j})|_Y \leq |u_{i,j}|_X \leq n^2$ for each $1 \leq i<j \leq d$ while for each $w=\pi (r_s) \tilde g$ we have $|w|_Y \leq |r_s|_X \leq c$. \medskip

The group $(Y,+)$ is a free $\mathbb Z$-module of rank $nd(d-1)/2$ and  $A/L'W \simeq  Y/Z$ where \[ Z =\pi( \ker f) + \sum_{1 \leq i<j \leq d} \mathbb Z \pi(u_{i,j})= \sum_{w \in T} \mathbb Z w + \sum_{1 \leq i <j \leq d}\mathbb Z \pi(u_{i,j}).\]

 Lemma \ref{torsion} applied to $Y/Z$ gives 
$t(A/L'W) \leq (n^2)^{d(d-1)/2}c^{nd(d-1)/2}$ and so 
\[ t(H/H') \leq t(G^{ab}) t(A/L'W) \leq  t(G^{ab}) (n^2)^{d(d-1)/2}c^{nd(d-1)/2} \]
Since $t(G^{ab})$ does not depend on $H$ and $n^2 < 3^n$ we may take $C=(t(G^{ab})3c)^{d^2/2}$ giving 
$t(H/H') < C^n$ as required.
\end{proof}

\end{document}